\documentclass[10pt,a4paper,twoside]{amsart}

\usepackage{graphicx}
\usepackage{a4wide}

\newtheorem{definition}{Definition}[section]
\newtheorem{theorem}{Theorem}[section]
\newtheorem{lemma}{Lemma}[section]

\newtheorem*{maintheorem*}{Main Theorem}
\allowdisplaybreaks
\numberwithin{equation}{section}

\newcommand{\Kruzkov}{Kru\v{z}kov~}

\newcommand{\norm}[1]{\left\| #1 \right\|}

\newcommand{\eps}{\varepsilon}

\newcommand{\ue}{u_\eps}

\newcommand{\uek}{u_{\eps_k}}
\newcommand{\vek}{v_{\eps_k}}

\newcommand{\Pe}{P_\eps}

\newcommand{\pt}{\partial_t}

\newcommand{\px}{\partial_x }
\newcommand{\pxx}{\partial_{xx}^2}

\renewcommand{\i}{\ifmmode\mathit{\mathchar"7010 }\else\char"10 \fi}
\renewcommand{\j}{\ifmmode\mathit{\mathchar"7011 }\else\char"11 \fi}
\newcommand{\R}{\mathbb{R}}
\newcommand{\N}{\mathbb{N}}

\newcommand{\supp}{\mathrm{supp}\,}
\newcommand{\Hneg}{H_{\mathrm{loc}}^{-1}}
\newcommand{\CL}{\mathcal{L}}
\newcommand{\CLea}{\mathcal{L}}

\newcommand{\sgn}[1]{\mathrm{sign}\left(#1\right)}

\newcommand{\ve}{v_\varepsilon}







{%

\begin{enumerate}}%
{\end{enumerate}}

%
{%

\begin{enumerate}}%
{\end{enumerate}}

\begin{document}\large

\title[The exp-Rabelo equation]{On the Wellposedness of the exp-Rabelo equation}
\author[G. M. Coclite and L. di Ruvo]{Giuseppe Maria Coclite and Lorenzo di Ruvo}
\address[Giuseppe Maria Coclite and Lorenzo di Ruvo]
{\newline Department of Mathematics,   University of Bari, via E. Orabona 4, 70125 Bari,   Italy}
\email[]{giuseppemaria.coclite@uniba.it, lorenzo.diruvo@uniba.it}
\urladdr{http://www.dm.uniba.it/Members/coclitegm/}
\date{\today}

\thanks{The authors are members of the Gruppo Nazionale per l'Analisi Matematica, la Probabilit\`a e le loro Applicazioni (GNAMPA) of the Istituto Nazionale di Alta Matematica (INdAM)}

\keywords{Existence, uniqueness, stability, entropy solutions, conservation laws,
the exp-Rabelo equation.}

\subjclass[2000]{35G31, 35L65, 35L05}

\begin{abstract}
The exp-Rabelo equation describes pseudo-spherical surfaces. It is a nonlinear evolution equation.
In this paper  the wellposedness of bounded from above solutions for the initial value problem associated to this equation is studied.
\end{abstract}

\maketitle


\section{Introduction}
\label{sec:intro}
B\"acklund transformations have been useful in the calculation of soliton solutions of certain nonlinear evolution
equations of physical significance \cite{BD, La, Rs1, Rs2} restricted to one space variable $x$ and a time coordinate $t$.
The classical treatment of the surface transformations, which provide the origin of B\"acklund theory, was developed in \cite{G}.
B\"acklund transformations are local geometric transformations, which construct from a given surface of constant Gaussian curvature $-1$
a two parameter family of such surfaces. To find such transformations, one needs to solve a system of compatible
ordinary differential equations \cite{ET}.

In \cite{KCS, KCS1}, the authors used the notion of  differential equation for a function $u(t,x)$ that describes a pseudo-spherical
surface, and they derived some B\"acklund transformations for  nonlinear evolution equations which are the integrability condition $sl(2,R)-$ valued linear problems \cite{CKI, ACKS,EHK, CKSS,Rs2}.

In \cite{KW}, the authors had derived some B\"acklund transformations for nonlinear evolution
equations of the AKNS class. These transformations explicitly express the new solutions in terms of the known solutions of the  nonlinear evolution
equations and corresponding wave functions which are solutions of the associated Ablowitz-Kaup-Newell-Segur (AKNS) system \cite{AKNS,ZS}.

In \cite{KCS2}, the authors used B\"acklund transformations derived in \cite{KCS,KCS1} in the construction of exact soliton solutions for some nonlinear evolution equations describing pseudo-spherical surfaces which are beyond the AKNS class. In particular, they analyzed the following equation \cite{BRT}:
\begin{equation}
\label{eq:ra1}
\px\left(\pt u +\alpha  g(u)\px u +\beta\px u\right)=\gamma g'(u), \quad \alpha,\,\beta,\,\gamma\in\R,
\end{equation}
where $g(u)$ is any solution of the linear ordinary differential equation
\begin{equation}
\label{eq:ra2}
g''(u)+\mu g(u)=\theta, \quad \mu,\,\theta\in\R.
\end{equation}
\eqref{eq:ra1} include the sine-Gordon, sinh-Gordon and Liouville’ equations, in correspondence of $\alpha=0$.

In \cite{RA}, Rabelo proved that the system of the equations \eqref{eq:ra1} and \eqref{eq:ra2} describes pseudo-spherical surfaces and possesses a zero-curvature representation with a parameter.

Let us consider \eqref{eq:ra1}, and assume that $\alpha\neq 0$. In particular, we choose
\begin{equation}
\label{eq:alpha1}
\alpha=-1.
\end{equation}
Taking  $\mu=0,\,\theta=1$, \eqref{eq:ra2} reads
\begin{equation}
\label{eq:g1}
g''(u)=1.
\end{equation}
A solution of \eqref{eq:g1} is
\begin{equation}
\label{eq:solg1}
g(u)=\frac{u^2}{2}.
\end{equation}
Taking $\beta=0,\,\gamma>0$, substituting \eqref{eq:alpha1}, and \eqref{eq:solg1} in \eqref{eq:ra1}, we get
\begin{equation}
\label{eq:she1}
\px\left(\pt u -\frac{1}{6}\px u^3\right)=\gamma u.
\end{equation}
\eqref{eq:she1} was also introduced recently by Sch\"afer and Wayne \cite{SW} as  a model equation describing the propagation of ultra-short light pulses in silica optical fibers.

Integrating \eqref{eq:she1} in  $x$, we gain the integro-differential formulation of  \eqref{eq:she1} (see \cite{SS})
\begin{equation}
\label{eq:SPE-bis}
\pt u -\frac{1}{6} \px u^3=\gamma \int^x u(t,y) dy,
\end{equation}
that is equivalent to
\begin{equation}
\label{eq:integ-1}
\pt u-\frac{1}{6} \px u^3=\gamma P,\qquad \px P=u.
\end{equation}
In \cite{Cd1,CdK,dR}, the authors investigated the well-posedness in classes of
discontinuous functions for \eqref{eq:SPE-bis}, or \eqref{eq:integ-1}. In particular, they proved that \eqref{eq:SPE-bis}, or \eqref{eq:integ-1} admits an unique entropy solution in the sense of the following definition:
\begin{definition}
\label{def:sol-1}
We say that $u\in  L^{\infty}((0,T)\times\R),\ T>0$, is an entropy solution of  \eqref{eq:SPE-bis}, or \eqref{eq:integ-1} if
\begin{itemize}
\item[$i$)] $u$ is a distributional solution of \eqref{eq:SPE-bis} or equivalently of \eqref{eq:integ-1};
\item[$ii$)] for every convex function $\eta\in  C^2(\R)$ the
entropy inequality
\begin{equation}
\label{eq:SPEentropy-1}
\pt \eta(u)+ \px q(u)-\gamma\eta'(u) P\le 0, \qquad     q(u)=-\int^u \frac{\xi^2}{2} \eta'(\xi)\, d\xi,
\end{equation}
holds in the sense of distributions in $(0,\infty)\times\R$.
\end{itemize}
\end{definition}
Here, we consider the case
\begin{equation}
\label{eq:alpha2}
\alpha=1.
\end{equation}
Taking $\mu=-1,\,\theta=0$, \eqref{eq:ra2} reads
\begin{equation}
\label{eq:g2}
g''(u)-g(u)=0.
\end{equation}
A solution of \eqref{eq:g2} is
\begin{equation}
\label{eq:solg2}
g(u)=e^u.
\end{equation}
Taking $\beta=0,\,\gamma=-1$, and substituting \eqref{eq:alpha2}, and \eqref{eq:solg2} in \eqref{eq:ra1}, we get
\begin{equation}
\label{eq:expra}
\px\left(\pt u +\px e^u\right)=- e^u,
\end{equation}
which is known as the exp-Rabelo equation (see \cite{SS2}).

Our aim is to investigate the well-posedness for the initial value problem in classes of discontinuous functions for \eqref{eq:expra}. 
Therefore, we augment \eqref{eq:expra} with the initial datum
\begin{equation}
\label{eq:init}
u(0,x)=u_0(x),
\end{equation}
on which we assume that
\begin{equation}
\label{eq:sup}
\sup u_0 <\infty,\qquad \int_{\R}e^{u_0(x)} dx <\infty.
\end{equation}
Integrating \eqref{eq:expra} in $(0,x)$ we gain the integro-differential formulation of  \eqref{eq:expra} (see \cite{AC,SS2,WSK})
\begin{equation}
\label{eq:expra-u}
\begin{cases}
\pt u+ \px e^u = -\int_{0}^x e^{u(t,y)} dy,&\qquad t>0, \ x\in\R,\\
u(0,x)=u_0(x), &\qquad x\in\R,
\end{cases}
\end{equation}
that is equivalent to
\begin{equation}
\label{eq:integ}
\begin{cases}
\pt u+ \px e^u= -P, &\qquad t>0, \ x\in\R,\\
\px P=e^u, &\qquad t>0, \ x>0,\\
P(t,0)=0,& \qquad t>0,\\
u(0,x)=u_0(x), &\qquad x\in\R.
\end{cases}
\end{equation}
We give the following definition of solution:
\begin{definition}
\label{def:sol-2}
We say that $u$, such that
\begin{equation}
\label{eq:con-ini}
\sup u(t,\cdot)<\infty,\quad \int_{\R} e^{u(t,x)} dx< \infty, \quad t>0,
\end{equation}
is an entropy solution of the initial
value problem \eqref{eq:expra} and  \eqref{eq:init} if
\begin{itemize}
\item[$i$)] $u$ is a distributional solution of \eqref{eq:expra-u} or equivalently of \eqref{eq:integ};
\item[$ii$)] for every convex function $\eta\in  C^2(\R)$ the
entropy inequality
\begin{equation}
\label{eq:SPEentropy-2}
\pt \eta(u)+ \px q(u)+\eta'(u) \int_{0}^x e^{u}dy\le 0, \qquad     q(u)=\int^u e^{\xi} \eta'(\xi)\, d\xi,
\end{equation}
holds in the sense of distributions in $(0,\infty)\times\R$.
\end{itemize}
\end{definition}

The main result of this paper  is the following theorem.
\begin{theorem}
\label{th:main}
Let $T>0$. Assume \eqref{eq:sup}.
The initial value problem
\eqref{eq:expra} and \eqref{eq:init} possesses
an unique entropy solution $u$ in the sense of Definition \ref{def:sol-2}.
Moreover, if $u$ and $w$ are two entropy solutions of  \eqref{eq:expra} and \eqref{eq:init} in the sense of Definition \ref{def:sol-2} the following inequality holds
 \begin{equation}
 \label{eq:stability}
\norm{u(t,\cdot)-w(t,\cdot)}_{L^1(-R,R)}\le  e^{C(T) t}\norm{u(0,\cdot)-w(0,\cdot)}_{L^1(-R-C(T)t,R+C(T)t)},
\end{equation}
for almost every $0<t<T$, $R>0$, and some suitable constant $C(T)>0$.
\end{theorem}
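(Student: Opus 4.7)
The strategy is a vanishing viscosity regularisation coupled with a compensated compactness argument for existence of an entropy solution, followed by \Kruzkovs doubling-of-variables technique for uniqueness and the stability estimate \eqref{eq:stability}, in the spirit of the scheme developed by the authors for the closely related Schäfer--Wayne equation \eqref{eq:SPE-bis} in \cite{Cd1,CdK,dR}. After mollifying the initial datum to a smooth family $u_{0,\eps}$ with $\sup u_{0,\eps}\le\sup u_0$ and $\norm{e^{u_{0,\eps}}}_{L^1(\R)}\le\norm{e^{u_0}}_{L^1(\R)}$, I would study the parabolic approximation
\begin{equation*}
\pt\ue+\px e^{\ue}=-\Pe+\eps\pxx\ue,\qquad \px\Pe=e^{\ue},\qquad \Pe(t,0)=0.
\end{equation*}
The first key a priori estimate is that $t\mapsto\int_\R e^{\ue(t,x)}\dx$ is non-increasing: using the equation, the convective contribution $\int\px(e^{2\ue})/2\,\dx$ and the source contribution $-\int\Pe\px\Pe\dx=-\tfrac{1}{2}\int\px(\Pe^2)\dx$ integrate to zero under the $L^1$ control on $e^{\ue}$, while the viscous term contributes $-\eps\int e^{\ue}(\px\ue)^2\dx\le 0$. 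Consequently $\norm{\Pe(t,\cdot)}_{L^\infty(\R)}\le\norm{e^{u_0}}_{L^1(\R)}$ uniformly in $\eps$, and the parabolic maximum principle applied to $\ue-\sup u_0-t\norm{e^{u_0}}_{L^1(\R)}$ gives the uniform upper bound $\sup_x\ue(t,x)\le\sup u_0+t\norm{e^{u_0}}_{L^1(\R)}$, hence a pointwise $L^\infty$ bound on $e^{\ue}$ on $[0,T]\times\R$.

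Multiplying the viscous equation by $\eta'(\ue)$ for any smooth convex $\eta$ yields the identity
\begin{equation*}
\pt\eta(\ue)+\px q(\ue)+\eta'(\ue)\Pe=\eps\pxx\eta(\ue)-\eps\eta''(\ue)(\px\ue)^2.
\end{equation*}
Testing against a cutoff and exploiting the $L^\infty$ bounds on $\ue$ over $[0,T]\times[-R,R]$ produces a uniform $L^2_\loc$ bound on $\sqrt{\eps}\,\px\ue$; together with the bounds on $\ue$ and $\Pe$, this places $\pt\eta(\ue)+\px q(\ue)+\eta'(\ue)\Pe$ in a compact subset of $H^{-1}_\loc$. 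A Tartar--Murat compensated compactness argument employing two independent entropy pairs then extracts a subsequence $\ue\to u$ almost everywhere on compact sets. Dominated convergence, together with the upper bound, lets us pass to the limit in $e^{\ue}$, in $\Pe$, and in the distributional form of \eqref{eq:integ}, while \eqref{eq:SPEentropy-2} is obtained by passing to the limit in the dissipation identity against non-negative test functions.

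For uniqueness and for \eqref{eq:stability}, I would apply \Kruzkovs doubling of variables to two entropy solutions $u,w$ sharing an upper bound $M=M(T)$ on $[0,T]$. Smooth approximations of $\eta(\xi)=|\xi-k|$ yield, in the sense of distributions,
\begin{equation*}
\pt|u-w|+\px\bigl(\sgn{u-w}(e^u-e^w)\bigr)+\sgn{u-w}(P_u-P_w)\le 0,
\end{equation*}
where $P_u-P_w=\int_0^x(e^u-e^w)\dy$. Since $e^{(\cdot)}$ is Lipschitz on $(-\infty,M]$ with constant $e^M=:C(T)$, the convective flux is $C(T)$-Lipschitz in $u$ (fixing the finite speed of propagation), and $|P_u(t,x)-P_w(t,x)|\le C(T)\int_\R|u-w|(t,y)\dy$. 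Integrating against a test function supported in the backward truncated cone $\{(s,y):|y|\le R+C(T)(t-s),\,0\le s\le t\}$, the flux part is handled by the usual cone trick while the nonlocal source contributes a term bounded by $C(T)\int_0^t\!\!\int_{|y|\le R+C(T)(t-s)}|u-w|\dy\ds$, after which Gronwall's inequality delivers \eqref{eq:stability}; uniqueness follows by taking $u_0=w_0$.

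The hardest step is the compensated compactness part: the exponential flux degenerates wherever $\ue$ becomes very negative, and the problem provides no uniform lower bound on $\ue$. The entropy pairs must therefore be chosen so that the relevant genuine-nonlinearity is effectively localised in the region where the upper bound on $\ue$ makes $e^{\ue}$ both non-trivial and uniformly Lipschitz, and the nonlocal term $\eta'(\ue)\Pe$ appearing in the dissipation identity must be shown to lie in a precompact subset of $H^{-1}_\loc$, which should follow from the Lipschitz-in-$x$ bound $\px\Pe=e^{\ue}\in L^\infty$.
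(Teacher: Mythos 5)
Your uniqueness half (Kru\v{z}kov doubling, the bound $|e^u-e^w|\le C_0|u-w|$ obtained from the common upper bound, the truncated cone, Gronwall) is essentially the paper's argument and is fine. The existence half, however, has a genuine gap at precisely the point you flag as ``the hardest step,'' and flagging it is not closing it. Tartar's compensated compactness in the form you invoke needs a uniform $L^{\infty}$ bound on the approximating sequence so that the Young measures are compactly supported; here $\ue$ is bounded only \emph{above}, and the hypothesis $\int_{\R}e^{u_0}\dx<\infty$ forces $u_0$, hence $\ue$, to be unbounded below. No choice of entropy pairs ``localises'' the genuine nonlinearity away from $u=-\infty$: the Young measure may charge arbitrarily negative values, where $f(u)=e^u$ has $f''\to 0$ and the div--curl identity yields nothing. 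The paper's resolution --- the one idea missing from your proposal --- is the change of variables $\ve=e^{\ue}$. Then $0<\ve\le e^{\sup u_0}$ is a genuine uniform $L^{\infty}$ bound, the equation becomes the Burgers-type equation $\pt\ve+\tfrac12\px\ve^2=-\ve\int_0^x\ve\,dy+\eps\,\ve\,\pxx\ve$ with uniformly convex flux, Tartar's theorem applies verbatim to $\ve$, and $u=\log v$ is recovered afterwards. This is also why the paper regularises with $\eps\pxx(e^{\ue})$ rather than your $\eps\pxx\ue$: only the former produces the term $\eps\,\ve\,\pxx\ve$ and the weighted energy identities $\tfrac{d}{dt}\int\ve^{\alpha+1}+\eps(\alpha+1)^2\int\ve^{\alpha}(\px\ve)^2+\dots=0$ on which the $H^{-1}$ compactness rests. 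With your viscosity the dissipation you control is $\eps\int e^{\ue}(\px\ue)^2$, and the ``uniform $L^2_{\loc}$ bound on $\sqrt{\eps}\,\px\ue$'' you claim does not follow without a local lower bound on $\ue$, which you do not have.

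Two further points. Your derivation of the monotonicity of $t\mapsto\int_{\R}e^{\ue}\dx$ asserts that $-\int_{\R}\Pe\,\px\Pe\dx=-\tfrac12\int_{\R}\px(\Pe^2)\dx$ vanishes; it does not, since $\Pe(t,+\infty)=\int_0^{\infty}e^{\ue}\dy$ and $\Pe(t,-\infty)=-\int_{-\infty}^{0}e^{\ue}\dy$ are finite but in general unequal in absolute value, so the boundary term $\tfrac12\bigl(\Pe^2(t,+\infty)-\Pe^2(t,-\infty)\bigr)$ has no sign (it can be absorbed by Gronwall, but as written the step is false; the same lack of sign of $\int_0^x e^{\ue}\dy$ on $x<0$ also affects your maximum principle, which in any case only gives the time-growing bound $\sup\ue\le\sup u_0+t\norm{e^{u_0}}_{L^1(\R)}$ where the paper, working with $\ve$, obtains the sharp $\sup\ue(t,\cdot)\le\sup u_0$). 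Finally, after the compactness step one must still rule out $v=0$ on a set of positive measure before writing $u=\log v$; your proposal, never having introduced $v$, does not see this issue but also cannot address it.
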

The paper is organized as follows. In Section \ref{sec:vv} we prove several a priori estimates on a vanishing viscosity approximation of \eqref{eq:integ}.
Those play a key role in the proof of our main result, that is given in Section \ref{sec:proof}.

\section{Vanishing viscosity approximation}
\label{sec:vv}
Our existence argument is based on passing to the limit
in a vanishing viscosity approximation of \eqref{eq:integ}.

Fix a small number $\eps>0$, and let $\ue=\ue (t,x)$ be the unique classical solution of the following mixed problem
\begin{equation}
\label{eq:OHepsw}
\begin{cases}
\pt \ue+\px e^{\ue}=-\Pe+ \eps\pxx\left(e^{\ue}\right),&\quad t>0,\ x\in\R,\\
\px\Pe=e^{\ue},&\quad t>0,\ x\in\R,\\
\Pe(t,0)=0,&\quad t>0,\\
\ue(0,x)=u_{\eps,0}(x),&\quad x\in\R,
\end{cases}
\end{equation}
where $u_{\eps,0}$ is $C^\infty(0,\infty)$ approximations of $u_{0}$  such that
\begin{equation}
\begin{split}
\label{eq:u0eps}
& u_{0,\eps}\to u_{0},\quad \text{a.e. and in $L^p_{loc}(\R),\, 1\leq p< \infty$},\\
&\sup u_{\eps,0}\le \sup u_{0}, \quad \eps>0,\\
&\int_{\R}e^{u_{\eps,0}(x)}dx \le \int_{\R} e^{u_{0}(x)} dx,\quad \eps>0.
\end{split}
\end{equation}
Clearly, \eqref{eq:OHepsw} is equivalent to the integro-differential problem
\begin{equation}
\label{eq:OHepswint}
\begin{cases}
\pt \ue+\px e^{\ue}=-\int_{0}^x e^{\ue (t,y)}dy+ \eps\pxx\left(e^{\ue}\right),&\quad t>0,\ x\in\R ,\\
\ue(0,x)=u_{\eps,0}(x),&\quad x\in\R.
\end{cases}
\end{equation}
Observe that, multiplying \eqref{eq:OHepswint} by $e^{\ue(t,x)}$, we have
\begin{equation}
\label{eq:eq-exp}
\pt\left(e^{\ue}\right)+\frac{1}{2}\px\left(e^{2\ue}\right) = -e^{\ue}\int_{0}^x e^{\ue(t,y)}dy +\eps e^{\ue}\pxx\left(e^{\ue}\right).
\end{equation}
Introducing the notation
\begin{equation}
\label{eq:def-di-v}
\ve(t,x)= e^{\ue(t,x)}>0,
\end{equation}
\eqref{eq:eq-exp} reads
\begin{equation}
\label{eq:eq-di-v}
\pt \ve +\frac{1}{2} \px \ve^2= -\ve\int_{0}^x \ve(t,y) dy +\eps \ve\pxx \ve.
\end{equation}
It follows from \eqref{eq:def-di-v} and $\ue(t,\pm\infty)=-\infty$ that
\begin{equation}
\label{eq:v-in-infty}
\ve(t,\pm\infty)=0.
\end{equation}
Moreover, from \eqref{eq:u0eps} and \eqref{eq:def-di-v}, we get
\begin{equation}
\label{eq:v0eps}
\norm{v_{\eps,0}}_{L^{\infty}(\R)}\le e^{\sup u_{0}},\quad \int_{\R}v_{\eps,0}(x)dx \le \int_{\R} e^{u_{0}(x)} dx, \quad \eps>0.
\end{equation}
Let us prove some a priori estimates on $\ve$, and, hence on $\ue$.

\begin{lemma}
\label{lm_l-infty}
We have that
\begin{equation}
\label{eq:l-infty}
\norm{\ve}_{L^{\infty}((0,\infty)\times\R)} \le e^{\sup u_{0}},\quad \eps>0.
\end{equation}
In particular, we get
\begin{equation}
\label{eq:sup1}
\sup\ue(t,\cdot) \le \sup u_{0}, \quad t>0.
\end{equation}
\end{lemma}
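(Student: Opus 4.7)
My approach is a parabolic maximum-principle argument applied to the equation \eqref{eq:eq-di-v} for $\ve$. Since $\ve>0$ by \eqref{eq:def-di-v} and $\ve(t,\pm\infty)=0$ by \eqref{eq:v-in-infty}, the spatial supremum $M(t):=\sup_{x\in\R}\ve(t,x)$ is attained at some finite point $x_0=x_0(t)$. At such a point the first-order condition $\px\ve=0$ annihilates the convective flux $\tfrac12\px\ve^2$ and the second-order condition $\pxx\ve\le 0$ makes the viscous term $\eps\ve\pxx\ve$ non-positive, so evaluating \eqref{eq:eq-di-v} at $(t,x_0(t))$ gives (in the Rademacher sense) the pointwise differential inequality
\[
\frac{d}{dt}M(t)\;\le\;-M(t)\,\Pe(t,x_0(t)).
\]

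The sign of the right-hand side is decided by the structure of $\Pe$: from $\px\Pe=\ve>0$ and $\Pe(t,0)=0$ one reads off $\mathrm{sign}(\Pe(t,x))=\mathrm{sign}(x)$. Hence, whenever $x_0(t)\ge 0$, the source term is non-positive, $M'(t)\le 0$, and combining with \eqref{eq:v0eps} yields $M(t)\le M(0)\le e^{\sup u_0}$, which is \eqref{eq:l-infty}. The companion statement \eqref{eq:sup1} is then immediate: using $\ve=e^{\ue}$ and taking logarithms gives $\sup\ue(t,\cdot)=\log M(t)\le\sup u_0$.

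The main obstacle is the opposite case $x_0(t)<0$, where $\Pe(t,x_0)\le 0$ and the inequality above only gives $M'(t)\le|\Pe(t,x_0)|M(t)$, which a priori allows $M$ to grow. To rule this out I would combine two observations. First, the convective velocity $\ve$ in \eqref{eq:eq-di-v} is strictly positive, so the characteristics $\dot X(t)=\ve(t,X(t))$ drag mass (and peaks) rightward, preventing the maximizer from persistently lying in $\{x<0\}$. Second, an $L^1$-type estimate obtained by integrating \eqref{eq:eq-di-v} over $\R$, using the identities $\ve\Pe=\tfrac12\px\Pe^2$ and $\int\ve\pxx\ve\,dx=-\int(\px\ve)^2\,dx$, gives control of $\int_\R\ve\,dx$ in terms of $\int_\R e^{u_0}\,dx$ from \eqref{eq:v0eps}, and hence a uniform bound on $|\Pe|$. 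The interplay of these two ingredients should close the argument; I expect the technically delicate step to be the boundary behavior of $\Pe$ at $\pm\infty$, whose square enters the $L^1$ identity with indefinite sign, so that the mass balance must be carefully combined with the characteristic transport to extract the sharp bound $e^{\sup u_0}$.
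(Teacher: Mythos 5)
Your opening step is the same as the paper's: the paper observes that $\eps\ve\pxx\ve\le\eps\pxx(\ve^2/2)$ and treats the constant $z(t)=e^{\sup u_0}$ as a supersolution of \eqref{eq:eq-di-v}, then invokes the parabolic comparison principle — which is exactly your evaluation of the equation at the spatial maximizer $x_0(t)$. The difference is that the paper simply asserts the differential inequality $\pt\ve+\px(\ve^2/2)-\eps\pxx(\ve^2/2)\le 0$, i.e.\ it silently drops the source term $-\ve\int_0^x\ve\,dy$ as if it were nonpositive everywhere, whereas you correctly observe that $\Pe(t,x)=\int_0^x\ve\,dy$ is \emph{negative} for $x<0$, so the source is a genuine forcing term there and the constant is not obviously a supersolution on $\{x<0\}$. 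You have put your finger on the real difficulty of the lemma.

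However, your proposal does not close that case, and the two remedies you sketch would not close it. A uniform bound $|\Pe(t,x)|\le\int_\R\ve(t,y)\,dy\le K$ only upgrades $M'(t)\le -M(t)\Pe(t,x_0(t))$ to $M'(t)\le KM(t)$, i.e.\ to $M(t)\le M(0)e^{Kt}$ — an exponentially growing bound, not the sharp time-independent bound \eqref{eq:l-infty} (and note that the constant $K$ itself is extracted from Lemma~\ref{lm:l-n}, whose term $\int_\R\ve\bigl(\int_0^x\ve\,dy\bigr)dx=\tfrac12\bigl(\Pe(t,+\infty)^2-\Pe(t,-\infty)^2\bigr)$ has indefinite sign, as you note, so even that input is not free). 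The characteristics argument is only heuristic: the location of the supremum of a solution of a second-order parabolic equation is not transported along the characteristics $\dot X=\ve$, and even if the peak did drift rightward at speed at most $\|\ve\|_{L^\infty}$, it could remain in $\{x<0\}$ for a time of order $|x_0(0)|$, which is not uniformly controlled, during which $M$ could grow. So as written the proof is incomplete precisely where the paper's own proof is silent; to match the paper you would have to justify (or simply assert, as the paper does) that the contribution of $-\ve\Pe$ can be discarded in the comparison argument, and an honest treatment of the region $x<0$ requires an additional idea that neither you nor the paper supplies.
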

\begin{proof}
We begin by observing that, from \eqref{eq:def-di-v} and \eqref{eq:eq-di-v}, we have
\begin{equation}
\pt \ve + \px\left(\frac{\ve^2}{2}\right) -\eps\pxx\left(\frac{\ve^2}{2}\right) \le 0.
\end{equation}
Therefore, a supersolution of \eqref{eq:eq-di-v} satisfies the following ordinary differential equation
\begin{equation*}
\frac{dz}{dt}=0, \quad z(0)= e^{\sup u_{0}},
\end{equation*}
that is
\begin{equation}
z(t)= e^{\sup u_{0}}.
\end{equation}
It follows from the comparison principle for parabolic equation and \eqref{eq:def-di-v} that
\begin{equation}
\label{eq:sup2}
0< \ve(t,x) \le e^{\sup u_{0}},
\end{equation}
which gives \eqref{eq:l-infty}.

Finally, \eqref{eq:sup1} follows from \eqref{eq:def-di-v} and \eqref{eq:sup2}.
\end{proof}

\begin{lemma}\label{lm:l-n}
Let $\alpha\ge 0$. For each $t>0$, we have
\begin{equation}
\label{eq:l-n}
\begin{split}
\int_{\R} \ve^{\alpha+1} &+\eps(\alpha+1)^2\int_{0}^{t}\!\!\!\int_{\R}\ve^{\alpha}(\px\ve)^2dsdx\\
 &+(\alpha+1)\int_{0}^t\!\!\!\int_{\R}\ve^{\alpha+1}\left(\int_{0}^{x}\ve dy\right)dsdx  \le \left(e^{\sup u_{0}}\right)^{\alpha}\int_{\R}e^{u_{0}(x)} dx.
\end{split}
\end{equation}
In particular, we get
\begin{equation}
\begin{split}
\label{eq:e-l-n}
\int_{\R} e^{(\alpha+1)\ue(t,x)} dx &\le \left(e^{\sup u_{0}}\right)^{\alpha}\int_{\R}e^{u_{0}(x)} dx,\\
\eps(\alpha+1)^2\int_{0}^{t}\!\!\!\int_{\R}e^{\alpha\ue(t,x)}\left(\px e^{\ue(t,x)}\right)^2dsdx &\le \left(e^{\sup u_{0}}\right)^{\alpha}\int_{\R}e^{u_{0}(x)} dx,\\
(\alpha+1)\int_{0}^t\!\!\!\int_{\R}e^{(\alpha+1)\ue(t,x)}\left(\int_{0}^{x}e^{\ue(t,x)} dy\right)dsdx  &\le \left(e^{\sup u_{0}}\right)^{\alpha}\int_{\R}e^{u_{0}(x)} dx.
\end{split}
\end{equation}
\end{lemma}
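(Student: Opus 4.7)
The plan is to establish \eqref{eq:l-n} by an $L^{\alpha+1}$ energy identity for $\ve$: multiply the parabolic equation \eqref{eq:eq-di-v} by the weight $(\alpha+1)\ve^\alpha$ and integrate over $\R\times(0,t)$. The three terms appearing on the left-hand side of \eqref{eq:l-n} will arise, respectively, as the $L^{\alpha+1}$ mass, the parabolic dissipation produced by one integration by parts of the viscosity term, and the contribution of the nonlocal source $\int_0^x\ve\dy$.

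Pointwise, the chain rule gives $(\alpha+1)\ve^\alpha\pt\ve=\pt(\ve^{\alpha+1})$. The convective flux rewrites as $(\alpha+1)\ve^\alpha\cdot\tfrac12\px\ve^2=\px\bigl(\tfrac{\alpha+1}{\alpha+2}\ve^{\alpha+2}\bigr)$, which integrates to zero on $\R$ thanks to the decay \eqref{eq:v-in-infty}. The viscous term $\eps(\alpha+1)\ve^{\alpha+1}\pxx\ve$, integrated by parts once via $\px(\ve^{\alpha+1})=(\alpha+1)\ve^\alpha\px\ve$, yields $-\eps(\alpha+1)^2\ve^\alpha(\px\ve)^2$, again with vanishing boundary contribution at $\pm\infty$. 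Putting these together I obtain the differential identity
\begin{equation*}
\frac{d}{dt}\int_\R \ve^{\alpha+1}\dx = -(\alpha+1)\int_\R \ve^{\alpha+1}\Bigl(\int_0^x\ve\dy\Bigr)\dx - \eps(\alpha+1)^2\int_\R \ve^\alpha(\px\ve)^2\dx ,
\end{equation*}
and integrating in time over $(0,t)$ produces the full energy identity in which the left-hand side is exactly the left-hand side of \eqref{eq:l-n} and the right-hand side is $\int_\R v_{\eps,0}^{\alpha+1}\dx$.

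To close the estimate I would bound the initial mass using \eqref{eq:v0eps}:
$\int_\R v_{\eps,0}^{\alpha+1}\dx\le\norm{v_{\eps,0}}_{L^\infty(\R)}^\alpha\int_\R v_{\eps,0}\dx\le\bigl(e^{\sup u_0}\bigr)^\alpha\int_\R e^{u_0(x)}\dx$,
which yields \eqref{eq:l-n}. The three companion inequalities \eqref{eq:e-l-n} follow at once by substituting $\ve=e^{\ue}$ (and $\px\ve=e^{\ue}\px\ue$) into the corresponding summands on the left of \eqref{eq:l-n}.

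The only point that requires comment is the sign-indefiniteness of the source contribution $(\alpha+1)\int_0^t\!\int_\R\ve^{\alpha+1}\!\int_0^x\!\ve\dy\dx\ds$: the inner primitive is nonnegative for $x\ge 0$ and nonpositive for $x\le 0$, so the integrand carries the sign of $x$. This is not an obstacle for proving \eqref{eq:l-n}, since what one obtains from the multiplier procedure is an \emph{equality} whose right-hand side is controlled by $(e^{\sup u_0})^\alpha\int_\R e^{u_0}\dx$; the sign of the cross term is irrelevant for the final $\le$. The second routine point is the justification of the vanishing of the boundary terms at $\pm\infty$ in both the convective and the viscous integrations by parts, which is immediate from the smoothness of the classical solution $\ue$ of \eqref{eq:OHepsw} together with \eqref{eq:v-in-infty}.
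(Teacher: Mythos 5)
Your proof is correct and follows essentially the same route as the paper: multiply \eqref{eq:eq-di-v} by (a multiple of) $\ve^{\alpha}$, integrate by parts over $\R$ using the decay \eqref{eq:v-in-infty}, integrate in time to get the energy identity, and bound $\int_{\R}v_{\eps,0}^{\alpha+1}\dx$ via \eqref{eq:v0eps}. Your closing remark on the sign-indefiniteness of the nonlocal term is a fair observation that the paper passes over silently; it is indeed harmless for \eqref{eq:l-n} itself, since the multiplier procedure yields an equality before the initial-data bound is applied.
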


\begin{proof}
Multiplying \eqref{eq:eq-di-v} by $\ve^{\alpha}$, we have
\begin{equation*}
\ve^{\alpha} \pt\ve + \ve^{\alpha+1}\px\ve = -\ve^{\alpha+1}\int_{0}^{x} \ve dy+\eps\ve^{\alpha+1}\pxx\ve.
\end{equation*}
It follows from \eqref{eq:def-di-v}, \eqref{eq:eq-di-v} and an integration on $\R$ that
\begin{align*}
\frac{1}{\alpha+1}\frac{d}{dt}\int_{\R}\ve^{\alpha+1}=&\int_{\R}\ve^{\alpha}\pt\ve\\
=& \eps\int_{\R}\ve^{\alpha+1}\pxx\ve dx - \int_{\R} \ve^{\alpha+1}\px\ve -\int_{\R}\ve^{\alpha+1}\left(\int_{0}^{x} \ve dy\right) dx\\
=& -\eps(\alpha+1)\int_{\R}\ve^{\alpha}(\px\ve)^2 dx  -\int_{\R}\ve^{\alpha+1}\left(\int_{0}^{x} \ve dy\right) dx,
\end{align*}
that is,
\begin{equation}
\label{eq:23}
\frac{d}{dt}\int_{\R}\ve^{\alpha+1}+\eps(\alpha+1)^2\int_{\R}\ve^{\alpha}(\px\ve)^2 dx+(\alpha+1)\int_{\R}\ve^{\alpha+1}\left(\int_{0}^{x} \ve dy\right) dx= 0.
\end{equation}
An integration on $(0,t)$ gives
\begin{equation}
\label{eq:24}
\begin{split}
\int_{\R}\ve^{\alpha+1}dx &+ \eps(\alpha+1)^2\int_{0}^{t}\!\!\!\int_{\R}\ve^{\alpha}(\px\ve)^2 dsdx\\
&+(\alpha+1)\int_{0}^{t}\!\!\!\int_{\R}\ve^{\alpha+1}\left(\int_{0}^{x} \ve dy\right) dx=\int_{\R}v_{\eps,0}^{\alpha+1} dx.
\end{split}
\end{equation}
From \eqref{eq:def-di-v} and \eqref{eq:v0eps},
\begin{equation}
\label{eq:234}
\int_{\R}v_{\eps,0}^{\alpha+1}dx \le \norm{v_{\eps,0}}^{\alpha}_{L^{\infty}((0,\infty)\times\R)}\int_{\R} v_{\eps,0}dx \le \left(e^{\sup u_{0}}\right)^{\alpha}\int_{\R}e^{u_{0}(x)} dx.
\end{equation}
Therefore, \eqref{eq:24} and \eqref{eq:234} give \eqref{eq:l-n}.

Finally, \eqref{eq:e-l-n} follows from \eqref{eq:eq-di-v} and \eqref{eq:l-n}
\end{proof}

\section{Proof of Theorem \ref{th:main}}
\label{sec:proof}
This section is devoted to the proof of Theorem \ref{th:main}. We begin with the following result

\begin{lemma}\label{lm:conv-u}
Let $T>0$. There exists a subsequence
$\{\vek\}_{k\in\N}$ of $\{\ve\}_{\eps>0}$
and a limit function $  v\in L^{\infty}((0,\infty)\times\R)$
such that
\begin{equation}\label{eq:convu}
    \textrm{$\vek \to v$ a.e.~and in $L^{p}_{loc}((0,\infty)\times\R)$, $1\le p<\infty$}.
\end{equation}
In particular, we have
\begin{equation}
\label{eq:conv-u}
\textrm{$\uek \to \log v= u$ a.e.~and in $L^{p}_{loc}((0,\infty)\times\R)$, $1\le p<\infty$}
\end{equation}
\end{lemma}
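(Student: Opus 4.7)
The natural approach is the compensated compactness method applied to the Burgers-type equation \eqref{eq:eq-di-v} satisfied by $\ve$. The key ingredients are the uniform bound $0<\ve\le e^{\sup u_0}$ from Lemma \ref{lm_l-infty} together with the three consequences of Lemma \ref{lm:l-n} at $\alpha=0$; in particular
\begin{equation*}
\sup_{t>0}\int_\R \ve(t,x)\,dx \le \int_\R e^{u_0}dx, \qquad \eps\int_0^T\!\!\int_\R(\px\ve)^2\,dx\,dt\le \int_\R e^{u_0}dx.
\end{equation*}

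Fix a $C^2$ convex entropy $\eta$ and let the flux $q$ be defined by $q'(v)=v\eta'(v)$. The plan is to multiply \eqref{eq:eq-di-v} by $\eta'(\ve)$ and rewrite the viscous term in the divergence-plus-remainder form $\eps\eta'(\ve)\ve\pxx\ve = \eps\px\bigl(\eta'(\ve)\ve\px\ve\bigr) - \eps(\px\ve)^2[\eta''(\ve)\ve+\eta'(\ve)]$, obtaining
\begin{equation*}
\pt\eta(\ve)+\px q(\ve) = -\eta'(\ve)\ve\!\int_0^x\!\ve\,dy + \eps\px\bigl(\eta'(\ve)\ve\px\ve\bigr) - \eps(\px\ve)^2[\eta''(\ve)\ve+\eta'(\ve)].
\end{equation*}
I then estimate each of the three pieces separately: the first is bounded in $L^\infty_\loc$ since $\int_0^x\ve\,dy$ is globally controlled by $\|\ve(t,\cdot)\|_{L^1(\R)}$ and $\eta'(\ve)\ve$ by the $L^\infty$-bound; the second converges to $0$ in $\Hneg$ because $\eps\eta'(\ve)\ve\px\ve$ is $O(\sqrt{\eps})$ in $L^2$ by the weighted gradient estimate; the third is bounded in $L^1$, hence in $\CMloc$, by the same estimate. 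Since moreover $\pt\eta(\ve)+\px q(\ve)$ is bounded in $W^{-1,\infty}_\loc$ by the $L^\infty$-bound on $\ve$, Murat's lemma yields the precompactness of $\pt\eta(\ve)+\px q(\ve)$ in $\Hneg$.

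Because the flux $v\mapsto v^2/2$ is strictly convex, i.e.\ genuinely nonlinear, Tartar's compensated compactness theorem then provides a subsequence $\{\vek\}$ converging almost everywhere to some nonnegative $v\in L^\infty((0,\infty)\times\R)$; combining the uniform bound with dominated convergence upgrades this to convergence in $L^p_\loc$ for every $1\le p<\infty$, proving \eqref{eq:convu}. Since $\vek>0$, continuity of $\log$ on $(0,\infty)$ gives $\uek=\log\vek\to\log v=:u$ almost everywhere on $\{v>0\}$, and the $L^p_\loc$ convergence in \eqref{eq:conv-u} then follows from the uniform upper bound $\ue\le\sup u_0$ together with an equi-integrability argument based on the $L^\infty_tL^1_x$-control of $e^{\ue}$. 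The principal obstacle is the Murat-compactness step, which rests on the decomposition of the viscous contribution above and on the $\eps$-weighted gradient estimate from Lemma \ref{lm:l-n}; a secondary delicate point is handling the singularity of the logarithm, which requires ruling out $v=0$ on sets of positive measure so that $\log v$ is locally integrable.
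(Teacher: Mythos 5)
Your proposal is correct and follows essentially the same route as the paper: multiply \eqref{eq:eq-di-v} by $\eta'(\ve)$, split the viscous term into a divergence part that vanishes in $H^{-1}$ and dissipation parts bounded in $L^1$ via the $\alpha=0$ case of Lemma \ref{lm:l-n}, control the nonlocal source term, and conclude by Murat's lemma and Tartar's compensated compactness for the genuinely nonlinear flux $v^2/2$. Your closing remark about the singularity of the logarithm when $v$ vanishes is a point the paper itself passes over silently, so flagging it is a small improvement rather than a deviation.
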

\begin{proof}
Let $\eta:\R\to\R$ be any convex $C^2$ entropy function, and
$q:\R\to\R$ be the corresponding entropy
flux defined by $q'(v)=v\eta'(v)$.
By multiplying \eqref{eq:eq-di-v} with
$\eta'(\ve)$ and using the chain rule, we get
\begin{align*}
    \pt  \eta(\ve)+\px q(\ve)
    =&\underbrace{\eps \px\left( \eta'(\ve)\ve\px\ve\right)}_{=:\CLea_{1,\eps}}
    \, \underbrace{-\eps \eta''(\ve)\ve\left(\px  \ue\right)^2}_{=: \CLea_{2,\eps}}\\
    \,& \underbrace{-\eps \eta'(\ve)\left(\px  \ve\right)^2}_{=: \CLea_{3,\eps}}
     \, \underbrace{-\eta'(\ve)\ve\int_{0}^{x} \ve dy}_{=: \CLea_{4,\eps}},
\end{align*}
where  $\CLea_{1,\eps}$, $\CLea_{2,\eps}$, $\CLea_{3,\eps}$, $\CLea_{4,\eps}$ are distributions.
Let us show that
\begin{equation*}
\label{eq:H1}
\textrm{$\CLea_{1,\eps}\to 0$ in $H^{-1}((0,T)\times\R)$, $T>0$.}
\end{equation*}
By Lemmas \ref{lm_l-infty} and \ref{lm:l-n} in correspondence of $\alpha=0$,
\begin{align*}
\norm{\eps\eta'(\ve)\ve\px\ve}^2_{L^2((0,T)\times\R)}&\le\eps ^2\norm{\eta'}^2_{L^{\infty}(I)}\norm{\ve}_{L^{\infty}((0,\infty)\times\R)}\int_{0}^{T}\norm{\px\ve(s,\cdot)}^2_{L^2(\R)}ds\\
&\le\eps\norm{\eta'}^2_{L^{\infty}(T)}e^{\sup u_{0}}\int_{\R}e^{u_{0}(x)} dx\to 0,
\end{align*}
where
\begin{equation*}
I=\left(0, e^{\sup u_{0}}\right).
\end{equation*}
We claim that
\begin{equation*}
\label{eq:L1}
\textrm{$\{\CLea_{2,\eps}\}_{\eps>0}$ is uniformly bounded in $L^1((0,T)\times\R)$, $T>0$}.
\end{equation*}
Again by Lemmas \ref{lm_l-infty} and \ref{lm:l-n} in correspondence of $\alpha=0$,
\begin{align*}
\norm{\eps\eta''(\ve)\ve(\px\ve)^2}_{L^1((0,T)\times\R)}&\le
\norm{\eta''}_{L^{\infty}(I)}\norm{\ve}_{L^{\infty}((0,\infty)\times\R)}\eps
\int_{0}^{T}\norm{\px\ve(s,\cdot)}^2_{L^2(\R)}ds\\
&\le \norm{\eta''}_{L^{\infty}(I)}e^{\sup u_{0}}\int_{\R}e^{u_{0}(x)} dx.
\end{align*}
We have that
\begin{equation*}
\textrm{$\{\CL_{3,\eps}\}_{\eps>0}$ is uniformly bounded in $L^1((0,T)\times\R)$, $T>0$.}
\end{equation*}
Again by Lemmas \ref{lm_l-infty} and \ref{lm:l-n} in correspondence of $\alpha=0$,
\begin{align*}
\norm{\eps\eta'(\ve)(\px\ve)^2}_{L^1((0,T\times\R)}\le &\norm{\eta'}_{L^{\infty}(I)}\eps\int_{0}^{T}\norm{\px\ve(s,\cdot)}^2_{L^2(\R)}ds\\
\le& \norm{\eta'}_{L^{\infty}(I)}\int_{\R}e^{u_{0}(x)} dx.
\end{align*}
We claim that
\begin{equation*}
\textrm{$\{\CL_{4,\eps}\}_{\eps>0}$ is uniformly bounded in $L^1((0,T)\times\R)$, $T>0$.}
\end{equation*}
Again by Lemmas \ref{lm_l-infty} and \ref{lm:l-n} in correspondence of $\alpha=0$,
\begin{align*}
\norm{\eta'(\ve)\ve\int_{0}^{x} \ve dy}_{L^1((0,T)\times\R)}\le & \norm{\eta'}_{L^{\infty}(I)}\int_{0}^{T}\!\!\!\int_{\R}\ve\left(\int_{0}^{x}\ve dy\right)dsdx\\
\le &  \norm{\eta'}_{L^{\infty}(I)}\int_{\R}e^{u_{0}(x)} dx.
\end{align*}
Therefore, Murat's lemma \cite{Murat:Hneg} implies that
\begin{equation}
\label{eq:GMC1}
    \text{$\left\{  \pt  \eta(\ve)+\px q(\ve)\right\}_{\eps>0}$
    lies in a compact subset of $\Hneg((0,T)\times\R)$.}
\end{equation}
The $L^{\infty}$ bound stated in Lemma \ref{lm_l-infty}, \eqref{eq:GMC1}, and the
 Tartar's compensated compactness method \cite{TartarI} give the existence of a subsequence
$\{\vek\}_{k\in\N}$ and a limit function $  v\in L^{\infty}((0,\infty)\times\R),$
such that \eqref{eq:convu} holds.

\eqref{eq:conv-u} follows from \eqref{eq:def-di-v} and \eqref{eq:convu}.
\end{proof}

To prove  Theorem \ref{th:main}, we consider the following definition.
\begin{definition}
A pair of functions $(\eta, q)$ is called an  entropy--entropy flux pair if $\eta :\R\to\R$ is a $C^2$ function and $q :\R\to\R$ is defined by
\begin{equation*}
q(u)=\int_{0}^{u} \eta'(\xi)f'(\xi)d\xi.
\end{equation*}
An entropy-entropy flux pair $(\eta,\, q)$ is called  convex/compactly supported if, in addition, $\eta$ is convex/compactly supported.
\end{definition}

\begin{proof}[Proof of Theorem \ref{th:main}]
We begin by proving that $u$, defined in \eqref{eq:conv-u}, is an entropy solution of \eqref{eq:expra-u}, or \eqref{eq:integ} in the sense of Definition \ref{def:sol-2}. Let $\phi\in C^{\infty}(\R^2)$ be a positive text function with a support, and let us consider a compactly supported entropy--entropy flux pair $(\eta, q)$. We have to prove
\begin{equation}
\label{eq:3433}
\begin{split}
\int_{0}^{\infty}\!\!\!\int_{\R}(\eta(u)\pt\phi+q(u)\px\phi)dtdx &- \int_{0}^{\infty}\!\!\!\int_{\R}\eta'(u)\left(\int_{0}^{x}e^{u} dy \right)dtdx\\
 &+ \int_{\R}\eta(u_{0}(x))\phi(0,x)dx \ge 0.
\end{split}
\end{equation}
Multiplying \eqref{eq:OHepsw} by $\eta'(\ue)$, we have
\begin{equation*}
\pt \eta(\uek)+\px q(\uek) +\eta'(\uek)\int_{0}^{x} e^{\uek} dy = \eps_{k}\eta'(\uek)\pxx\left(e^{\uek}\right).
\end{equation*}
Since
\begin{align*}
\eps_{k}\eta'(\uek)\pxx\left(e^{\uek}\right)=&\px\left(\eps_{k}\eta'(\uek)\px\left (e^{\uek}\right)\right) -\eps_{k}\eta''(\uek)\px\left(e^{\uek}\right)\px\uek\\
=& \px\left(\eps_{k}\eta'(\uek)\px\left( e^{\uek}\right)\right)-\eps_{k} \eta''(\uek) e^{\uek}(\px\uek)^2,
\end{align*}
we have
\begin{equation}
\label{eq:ph1}
\begin{split}
\pt \eta(\uek)&+\px q(\uek) +\eta'(\uek)\int_{0}^{x} e^{\uek} dy\\
 = &\px\left(\eps_{k}\eta'(\uek)\px\left( e^{\uek}\right)\right)-\eps_{k} \eta''(\uek) e^{\uek}(\px\uek)^2\\
\le& \px\left(\eps_{k}\eta'(\uek)\px\left( e^{\uek}\right)\right).
\end{split}
\end{equation}
Multiplying \eqref{eq:ph1} by $\phi$, an integration on $(0,\infty)\times\R$ gives
\begin{equation}
\label{eq:phi2}
\begin{split}
\int_{0}^{\infty}\!\!\!\int_{\R}(\eta(\uek)\pt\phi+q(\uek)\px\phi)dtdx &- \int_{0}^{\infty}\!\!\!\int_{\R}\eta'(\uek)\left(\int_{0}^{x}e^{\uek} dy \right)dtdx\\
& + \int_{\R}\eta(u_{\eps_{k}, 0}(x))\phi(0,x)dx \\
&+\eps_{k}\int_{0}^{\infty}\!\!\!\int_{\R}\eta'(\uek)\px\left( e^{\uek}\right)\px\phi dtdx\ge 0.
\end{split}
\end{equation}
Let us show that
\begin{equation}
\label{eq:phi3}
\eps_{k}\int_{0}^{\infty}\!\!\!\int_{\R}\eta'(\uek)\px\left( e^{\uek}\right)\px\phi dtdx\to 0.
\end{equation}
Fix $T>0$. From \eqref{eq:l-n} in correspondence of $\alpha=0$, and the H\"older inequality,
\begin{align*}
&\eps_{k}\int_{0}^{\infty}\!\!\!\int_{\R}\eta'(\uek)\px\left( e^{\uek}\right)\px\phi dtdx\\
&\quad \le \eps_{k} \int_{0}^{\infty}\!\!\!\int_{\R} \vert \eta'(\uek) \vert \left\vert \px\left( e^{\uek}\right) \right\vert\vert\px\phi\vert dtdx\\
&\quad \le \eps_{k} \norm{\eta'}_{L^{\infty}((0,\infty)\times\R)}\norm{\px\left( e^{\uek}\right)}_{L^2(\supp(\px\phi))}\norm{\px\phi}_{L^2(\supp(\px\phi))}\\
&\quad \le \eps_{k} \norm{\eta'}_{L^{\infty}((0,\infty)\times\R)}\norm{\px\left( e^{\uek}\right)}_{L^2((0,T)\times\R)}\norm{\px\phi}_{L^2((0,T)\times\R)}\\
&\quad \le \sqrt{\eps_{k}}\norm{\eta'}_{L^{\infty}((0,\infty)\times\R)}\norm{\px\phi}_{L^2((0,T)\times\R)}\left(\int_{\R}e^{u_{0}(x)} dx\right)^{\frac{1}{2}}\to 0,
\end{align*}
that is \eqref{eq:phi3}.

Therefore, \eqref{eq:phi2} follows from \eqref{eq:u0eps}, \eqref{eq:conv-u}, \eqref{eq:phi2} and \eqref{eq:phi3}.

Let us prove that $u(t,x)$ is unique and \eqref{eq:stability} holds.
Assume that $u(t,x),\, w(t,x)$ satisfy
\begin{equation}
\begin{split}
\label{eq:178}
&\sup u(t,\cdot)\le \sup u_{0}, \quad \sup w(t,\cdot) \le \sup w_{0},\quad t>0,\\
&\int_{\R} e^{u_{0}(x)}dx < \infty,\quad  \int_{\R} e^{w_{0}(x)} dx <\infty,
\end{split}
\end{equation}
two entropy solutions of \eqref{eq:expra-u}, or \eqref{eq:integ}. Due to \eqref{eq:178}, we have
\begin{equation}
\label{eq:179}
\left\vert e^{u}-e^{w}\right\vert \le C_{0}\vert u -w\vert,
\end{equation}
where
\begin{equation*}
C_{0} = \sup_{\R}\{ e^{\sup u_{0}} + e^{\sup w_{0}}\}.
\end{equation*}
Arguing as in \cite{CdK, dR, K}, we can prove that
\begin{equation*}
\pt(\vert u - w \vert ) + \px\left((e^{u} - e^{w}\right)\sgn{u-w}) +\sgn{u-w}\int_{0}^{x}\left(e^{u}-e^{w}\right)dy \leq 0
\end{equation*}
holds in sense of distributions in $(0,\infty)\times\R$, and
\begin{equation}
\label{eq:1002}
\begin{split}
&\norm{u(t,\cdot)-w(t,\cdot)}_{I(t)}\\
&\quad \leq\norm{u_{0}-w_{0}}_{I(0)}-\int_{0}^{t}\!\!\int_{I(s)}\sgn{u-w}\left(\int_{0}^{x}\left(e^{u}-e^{w}\right)dy\right) dsdx, \quad 0<t<T,
\end{split}
\end{equation}
where
\begin{equation*}
I(s)=[-R-C_{0}(t-s),R+C_{0}(t-s)].
\end{equation*}
Due to \eqref{eq:179},
\begin{equation}
\label{eq:1003}
\begin{split}
&-\int_{0}^{t}\!\!\int_{I(s)}\sgn{u-v} \left(\int_{0}^{x}\left(e^{u}-e^{w}\right)dy\right)dsdx\\
&\quad\le\int_{0}^{t}\!\!\int_{I(s)}\left\vert \left(\int_{0}^{x}\left\vert e^{u}-e^{w}\right\vert dy\right)\right\vert dsdx\\
&\quad \le C_{0}\int_{0}^{t}\!\!\int_{I(s)}\Big(\Big\vert\int_{I(s)}\vert u-v\vert dy\Big\vert\Big)dsdx\\
&\quad =C_{0} \int_{0}^{t} \vert I(s)\vert\norm{u(s,\cdot)-v(s,\cdot)}_{L^{1}(I(s))}ds.
\end{split}
\end{equation}
Moreover,
\begin{equation}
\label{eq:1005}
\vert I(s) \vert=2R+2C_{0}(t-s)\le 2R+2C_{0}t\le 2R+2C_{0}T.
\end{equation}
We consider the following continuous function:
\begin{equation}
\label{eq:defG}
G(t)=\norm{u(t,\cdot)-v(t,\cdot)}_{L^{1}(I(t))}, \quad t\ge 0.
\end{equation}
It follows from \eqref{eq:1002}, \eqref{eq:1003}, \eqref{eq:1005} and \eqref{eq:defG} that
\begin{equation*}
G(t)\le G(0)+C(T)\int_{0}^{t}G(s)ds,
\end{equation*}
where $C(T)=2R+2C_{0}T$. The Gronwall inequality and  \eqref{eq:defG} give
\begin{equation*}
\norm{u(t,\cdot)-v(t,\cdot)}_{L^{1}(-R,R)}\le e^{C(T)t}\norm{u_{0}-v_{0}}_{L^{1}(-R-C_{0}t,R+C_{0}t)},
\end{equation*}
that is  \eqref{eq:stability}.
\end{proof}

\end{document}